\def\BBox{\kern  -0.2cm\hbox{\vrule width 0.2cm height 0.2cm}}
\newtheorem{example}{Example}
\newtheorem{teo}{Theorem}[section]
\newtheorem{conje}{Conjecture}[section]
\newtheorem{coro}[teo]{Corollary}
\theoremstyle{definition}
\theoremstyle{remark}
\title{On transversal numbers of intersecting straight line systems and intersecting segment systems}
\author{Adrián Vázquez Ávila\thanks{adrian.vazquez@unaq.edu.mx}\\
	{\small Subdirección de Ingeniería y Posgrado}\\
	{\small Universidad Aeronáutica en Querétaro}\\
}
\date{}
\begin{document}
	\maketitle
	\begin{abstract}
	An intersecting $r$-uniform straight line system is an intersecting linear system whose lines consist of $r$ points on straight line segment of $\mathbb{R}^2$ and any two lines share a point. 
	
	Recently, the author [A. V{\' a}zquez-{\' A}vila, \emph{On intersecting straight line systems}, J. Discret. Math. Sci. Cryptogr. Accepted] proved that any intersecting $r$-uniform straight line system $(P,\mathcal{L})$  has transversal number at most $\nu_2-1$, with $r\geq\nu_2$, where $\nu_2$ is the maximum cardinality of a subset of lines $R\subseteq\mathcal{L}$ such that every triplet of different elements of $R$ do not have a common point. 
	
	In this paper, we improve such upper bound if the intersecting $r$-uniform straight line system satisfies $r=\nu_2$. This result has immediate consequences for some questions given by Oliveros et al. [D. Oliveros, C. O'Neill and S. Zerbib, \emph{The geometry and combinatorics of discrete line segment hypergraphs}, Discrete Math. {\bf 343} (2020), no. 6, 111825].  
\end{abstract}
	
	\textbf{Keywords.} Linear systems, straight line systems, segment systems, transversal number, 2-packing number.
	
	{\bf Math. Subj. Class.:} ~05C65, 05C69.
	\section{Introduction}
	A \emph{linear system} is a pair $(P,\mathcal{L})$ where
	$\mathcal{L}$ is a family of subsets on a ground finite set $P$, such that $|l\cap l^\prime|\leq 1$, for every pair of distinct subsets $l,l^\prime \in \mathcal{L}$. The linear system $(P,\mathcal{L})$ is \emph{intersecting} if $|l\cap l^\prime|=1$, for every pair of distinct subsets $l,l^\prime \in \mathcal{L}$. The elements of $P$ and $\mathcal{L}$ are called \emph{points} and \emph{lines}, respectively. A line with exactly $r$ points is called a \emph{$r$-line}; when all the lines of $\mathcal{L}$ are $r$-lines the linear system is called  \emph{$r$-uniform}. In this context, a \emph{simple graph} is an 2-uniform linear system.

	Let $(P^{\prime},\mathcal{L}^{\prime })$ and $(P,\mathcal{L})$ be two linear systems. The linear systems $(P^{\prime},\mathcal{L}^{\prime })$ and $(P,\mathcal{L})$ are \emph{isomorphic}, $(P^{\prime },\mathcal{L}^{\prime })\simeq(P,\mathcal{L})$, if after of deleting points of degree 1 or 0 from both, the linear systems $(P^{\prime},\mathcal{L}^{\prime })$ and $(P,\mathcal{L})$ are isomorphic as hypergraphs, see \cite{MR3727901}.
	
	A subset $T\subseteq P$ is a \emph{transversal} (also called \emph{vertex cover} or \emph{hitting set} in many papers, as example \cite{MR1167472,Dorfling,MR2038482,MR1157424, 	MR3373359,MR2383447,MR2765536,MR1921545,MR1149871,MR1185788,Stersou}) of $(P,\mathcal{L})$ if every line $l\in\mathcal{L}$ satisfies $T\cap l\neq\emptyset$. The \emph{transversal number} of $(P,\mathcal{L})$, $\tau=\tau(P,\mathcal{L})$, is the smallest possible cardinality of a
	transversal of $(P,\mathcal{L})$. On the other hand, a subset of lines $R$ of $\mathcal{L}$ is called a \emph{2-packing} of $(P,\mathcal{L})$ if every triplet of different elements of $R$ do not have a common point. The maximum cardinality of a 2-packing of $(P,\mathcal{L})$, $\nu_2=\nu_2(P,\mathcal{L})$, is called \emph{2-packing number} of $(P,\mathcal{L})$. There are some papers which have been studied this parameter, see for example \cite{AvilaLetters,AvilaAKCE,Avila_covering_grphs,MR3727901,AvilaEnotes,Avila_dom_gra,AvilaArscomb,AvilaJDMSC}. 
	
	 An interesting problem is found an upper bound of $\tau$ in terms of a function of $\nu_2$ for any linear system. Araujo-Pardo et al. in \cite{MR3727901} proved a relationship between the transversal and the 2-packing numbers for linear systems 
	\begin{equation}\label{desigualdad}
	\lceil \nu_{2}/2\rceil\leq\tau
	\leq \frac{\nu_2(\nu_2-1)}{2}.
	\end{equation}
	
	The transversal number of any linear system is upper bounded by a quadratic function of their 2-packing number. For some linear systems, the transversal number is bounded above by a linear function of their 2-packing number, see \cite{AvilaLetters,AvilaAKCE,Avila_covering_grphs,MR3727901,AvilaJDMSC}. However, Eustis and Verstra{\"e}te in \cite{MR3021333} proved, using probabilistic methods, the existence of $k$-uniform linear systems $(P,\mathcal{L})$ for
	infinitely many $k$'s and $n=|P|$ large enough, which transversal number is $\tau=n-o(n)$. This $k$-uniform linear
	systems has 2-packing number upper bounded by $\frac{2n}{k}$.

	Through this paper, every linear systems $(P,\mathcal{L})$ are considered with $|\mathcal{L}|>\nu_2$ due to the fact $|\mathcal{L}|=\nu_2$ if and only if $\Delta\leq2$. Alfaro and Vázquez-Ávila in \cite{AvilaLetters} proved that any linear system with $\Delta=2$ satisfies $\tau\leq\nu_2-1$.
	
A \emph{straight line representation} on $\mathbb{R}^{2}$ of a
linear system $(P,\mathcal{L})$ maps each point $x\in P$ to a point $p(x)$ of $\mathbb{R}^{2}$, and each line $L\in\mathcal{L}$ to a straight line segment $l(L)$ of $\mathbb{R}^{2}$ in such a way that for each point $x\in P$ and line $L\in\mathcal{L}$ satisfies $p(x)\in l(L)$ if and only if $x\in L$, and for each pair of distinct lines	$L,L^\prime\in\mathcal{L}$ satisfies $l(L)\cap l(L^\prime)=\{p(x):x\in L\cap L^\prime\}$. A
\emph{straight line system} $(P,\mathcal{L})$ is a linear system, such that it has a straight line representation on $\mathbb{R}^{2}$, see \cite{MR3727901}. 
	
	Araujo-Pardo et al. \cite{MR3727901} proved that any linear system with $\nu_2\in\{2,3\}$ satisfies $\tau\leq\nu_2-1$. Also, they proved any linear system with $\nu_2=4$ satisfies $\tau\leq4$; the inequality holds for a certain linear subsystems of the finite projective plane of order 3, $\Pi_3$; and they proved that, this linear subsystem have not a straight line representation on $\mathbb{R}^2$.
	
	\begin{teo}\label{teo:nu_2=4}\cite{MR3727901}
	Let $(P,\mathcal{L})$ be an straight line system. If $\nu_2\in\{2,3,4\}$, then $\tau\leq\nu_2-1$.
	\end{teo}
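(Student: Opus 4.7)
The plan is to split by the value of $\nu_2$. The cases $\nu_2 \in \{2,3\}$ can be handled for arbitrary linear systems, with no appeal to the straight line representation. For $\nu_2 = 2$ every three lines are concurrent, and iterating this forces all of $\mathcal{L}$ through a single point, so $\tau = 1$. For $\nu_2 = 3$, fix a maximum 2-packing $\{L_1,L_2,L_3\}$ with pairwise intersection points $p_{12},p_{13},p_{23}$; every other line is, by maximality of $\nu_2$, concurrent with two of the $L_i$ at some $p_{ij}$, so $\{p_{12},p_{13},p_{23}\}$ is a transversal. A short case analysis shows that one of the three points is redundant---otherwise one can exhibit a fourth 2-packing line incident to the rejected point---yielding $\tau \leq 2$.

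The substantive case is $\nu_2 = 4$, where inequality~\eqref{desigualdad} only gives $\tau \leq 6$ and prior combinatorial work~\cite{MR3727901} yields $\tau \leq 4$. I would assume for contradiction that $\tau = 4$ and aim at a geometric contradiction. Fix a maximum 2-packing $\mathcal{R} = \{L_1,L_2,L_3,L_4\}$ with six pairwise intersection points $p_{ij}$; as above, every line outside $\mathcal{R}$ meets some $p_{ij}$, so the six-element set $\{p_{ij}\}$ is a transversal. The assumption $\tau = 4$ forbids any transversal of size $3$, which forces strong incidence constraints on the lines of $\mathcal{L}\setminus\mathcal{R}$ relative to the $p_{ij}$. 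A careful counting of these incidences against the 2-packing bound $\nu_2 = 4$ forces $(P,\mathcal{L})$ to contain (up to isomorphism) the specific sub-linear system of the projective plane $\Pi_3$ of order $3$ identified in~\cite{MR3727901} as the unique $\tau = 4$ extremum.

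The final step is geometric: I would show that this $\Pi_3$-subsystem admits no straight line representation in $\mathbb{R}^2$. After placing $L_1,\ldots,L_4$ as straight segments in general position, I would track how the extra lines of the obstructing configuration must lie, and derive a projective incidence obstruction of Desargues or Sylvester--Gallai flavour that forces two nominally distinct lines of $\mathcal{L}$ to be supported on the same straight line of $\mathbb{R}^2$, violating linearity. I expect this last step to be the main obstacle, since it requires either a clean invocation of a classical incidence theorem or an explicit check over the combinatorial embeddings of the obstructing configuration. Once it is completed, $\tau = 4$ is incompatible with a straight line representation, so $\tau \leq 3 = \nu_2 - 1$, concluding the case $\nu_2 = 4$.
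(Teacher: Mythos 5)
This statement is not proved in the paper at all: Theorem \ref{teo:nu_2=4} is imported from \cite{MR3727901}, and the introduction merely summarizes the strategy of that reference, which is exactly the route you describe (the cases $\nu_2\in\{2,3\}$ hold for arbitrary linear systems; $\nu_2=4$ gives $\tau\leq4$ for arbitrary linear systems with equality only for a certain sub-linear system of $\Pi_3$; and that subsystem has no straight line representation on $\mathbb{R}^2$). So your outline retraces the known architecture, but as a self-contained argument it has genuine gaps. The cases $\nu_2\in\{2,3\}$ are essentially fine, with the caveat that for a non-intersecting system the points $p_{12},p_{13},p_{23}$ need not all exist --- maximality of the 2-packing only forces each extra line through an intersection point that actually exists --- and your ``redundancy'' case analysis is gestured at rather than carried out (it is repairable, since a failure of redundancy lets one assemble a 2-packing of size $4$, but you do not do this).

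The real gap is the case $\nu_2=4$, where both decisive steps are asserted rather than proved. First, the claim that $\tau=4$ forces the configuration to contain the specific $\Pi_3$-subsystem is the heart of the combinatorial work in \cite{MR3727901}; ``a careful counting of these incidences'' is not an argument, and invoking the extremal characterization from the very source of the theorem makes this step a citation, not a blind proof. Second, by your own admission, the geometric step --- showing that this $\Pi_3$-subsystem admits no straight line representation in $\mathbb{R}^2$ --- is left undone: you hope for an obstruction of ``Desargues or Sylvester--Gallai flavour'' but exhibit none, and you note the argument only works ``once it is completed.'' Since this non-realizability is precisely what improves the general bound $\tau\leq4$ to the straight-line bound $\tau\leq3=\nu_2-1$, the proposal as written does not establish the theorem; it reproduces the structure of the cited proof while omitting its two substantive components.
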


As a simple consequence of Theorem \ref{teo:nu_2=4} is the following:

\begin{coro}\label{coro:nu_2=4}
Let $(P,\mathcal{L})$ be an intersecting $r$-uniform straight line system with  $r\geq\nu_2$. If $\nu_2\in\{2,3,4\}$, then $\tau\leq\nu_2-1$.	
\end{coro}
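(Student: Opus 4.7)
The plan is essentially to observe that the corollary is an immediate specialization of Theorem \ref{teo:nu_2=4}. By definition, an intersecting $r$-uniform straight line system is, in particular, a straight line system (the additional properties of being intersecting and $r$-uniform only restrict the family of admissible examples). Hence, any $(P,\mathcal{L})$ satisfying the hypotheses of Corollary \ref{coro:nu_2=4} automatically satisfies the hypotheses of Theorem \ref{teo:nu_2=4}.

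With that observation in hand, the argument is a one-line invocation: given an intersecting $r$-uniform straight line system $(P,\mathcal{L})$ with $r\geq\nu_2$ and $\nu_2\in\{2,3,4\}$, regard it as a straight line system with $\nu_2\in\{2,3,4\}$ and apply Theorem \ref{teo:nu_2=4} to conclude $\tau\leq\nu_2-1$.

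There is no real obstacle here, and in particular the assumption $r\geq\nu_2$ plays no role in the deduction; it is carried along only because it is part of the framework that the author uses throughout the paper (and that was the setting of the bound $\tau\leq\nu_2-1$ established in the author's previous work for the general case $r\geq\nu_2$). Thus the corollary can be recorded with a single sentence of proof, and its main content is to isolate the intersecting $r$-uniform case as the regime that will be improved upon in the sequel when $r=\nu_2$.
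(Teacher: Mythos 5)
Your proof is correct and matches the paper's treatment: the paper presents this corollary as an immediate consequence of Theorem \ref{teo:nu_2=4}, exactly as you argue, since an intersecting $r$-uniform straight line system is in particular a straight line system with $\nu_2\in\{2,3,4\}$. Your remark that the hypothesis $r\geq\nu_2$ plays no role in the deduction is also accurate.
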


Recently, the author \cite{AvilaJDMSC} proved the following:

\begin{teo}\cite{AvilaJDMSC}\label{teo_anterior}
Any intersecting $r$-uniform straight line system satisfies $\tau\leq \nu_2-1$, where $r\geq\nu_2$.	
\end{teo}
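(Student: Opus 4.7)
The plan is to proceed by induction on $\nu_2$, with Corollary~\ref{coro:nu_2=4} covering the base cases $\nu_2\in\{2,3,4\}$. For $\nu_2\geq 5$, fix an intersecting $r$-uniform straight line system $(P,\mathcal{L})$ with $r\geq\nu_2$ and a maximum 2-packing $R=\{L_1,\dots,L_{\nu_2}\}\subseteq\mathcal{L}$. Because no three lines of $R$ are concurrent, the pairwise intersections $p_{ij}=L_i\cap L_j$ yield $\binom{\nu_2}{2}$ pairwise distinct points. For each $L\in\mathcal{L}\setminus R$, maximality of $R$ forces $R\cup\{L\}$ to fail the 2-packing condition, and since the lines of $R$ are pairwise non-triply concurrent, the violation can only be that $L$ passes through some $p_{ij}$. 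Moreover, since two straight lines share at most one point, $L$ cannot contain both $p_{ij}$ and $p_{ik}$ (they both lie on $L_i$), so the set $M_L=\{\{i,j\}:p_{ij}\in L\}$ is a nonempty matching on $\{1,\dots,\nu_2\}$.

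The core of the argument is to extract a transversal of size at most $\nu_2-1$ from the $p_{ij}$'s. The cleanest candidate is the star $T_k=\{p_{k,j}:j\neq k\}$, which covers all of $R$ automatically and covers $L\in\mathcal{L}\setminus R$ precisely when some edge of $M_L$ contains $k$. When some index $k\in\{1,\dots,\nu_2\}$ appears in every matching $M_L$, the star $T_k$ has size exactly $\nu_2-1$ and finishes the proof. Otherwise, I would use the straight line constraint—that the points $\{p_{ij}:\{i,j\}\in M_L\}$ are collinear along $L$, and thus the realized matchings are not arbitrary—to either swap some $L_i$ with a judiciously chosen $L\notin R$ into a different maximum 2-packing for which a star does work, or to build a non-star transversal by piercing the family $\{M_L\}_{L\notin R}$ with a small set of pairs whose union still covers $\{1,\dots,\nu_2\}$.

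The main obstacle is handling the case where no common piercing index exists. My contingency plan is a hybrid inductive step: select a point $p\in P$ of maximum degree, discard from $\mathcal{L}$ every line through $p$, and observe that the resulting sub-system remains intersecting, $r$-uniform, and straight line. If one can show that every maximum 2-packing of $(P,\mathcal{L})$ must use at least one line through $p$, then the sub-system has 2-packing number at most $\nu_2-1$, the inductive hypothesis yields a transversal there of size at most $\nu_2-2$, and adjoining $p$ produces a transversal of size $\nu_2-1$ for $(P,\mathcal{L})$. Establishing this piercing property—essentially that a single well-chosen high-degree point meets every maximum 2-packing—is the central technical difficulty, and is where I expect most of the geometric and combinatorial effort to be spent, likely by combining the collinearity constraints of the $M_L$'s with the intersecting hypothesis.
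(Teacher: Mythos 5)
Your write-up is a plan rather than a proof, and the decisive steps are left as acknowledged open problems. (For reference, the paper does not prove this theorem either; it only cites it from the author's earlier work, so there is no in-paper argument to match.) Your setup is sound: by maximality of the 2-packing $R$, every $L\in\mathcal{L}\setminus R$ must pass through one of the $\binom{\nu_2}{2}$ points $p_{ij}$, the incidences of $L$ with these points form a nonempty matching $M_L$, and the star $T_k$ of size $\nu_2-1$ is a transversal whenever some index $k$ meets every $M_L$. But the case in which no such common index exists is exactly where the theorem lives, and there you offer only intentions (``swap some $L_i$'', ``build a non-star transversal by piercing the family of matchings'') with no lemma, no construction, and no precise geometric statement. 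Some genuinely geometric input is indispensable here: the paper recalls that a linear subsystem of $\Pi_3$ with $\nu_2=4$ attains $\tau=4=\nu_2$ and has no straight line representation, so a purely combinatorial analysis of the matchings $M_L$ cannot close this case; the collinearity constraint you gesture at must be formulated and actually used, which you have not done.

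The contingency induction has the same status. Deleting all lines through a maximum-degree point $p$ does keep the system intersecting, $r$-uniform and straight line, and if every maximum 2-packing of $(P,\mathcal{L})$ contained a line through $p$, then the subsystem would have 2-packing number at most $\nu_2-1$ and the induction (with base cases from Corollary~\ref{coro:nu_2=4}) would close. But that piercing property is itself the unproven claim: maximality of the degree of $p$ only guarantees that many lines pass through $p$, while a maximum 2-packing is merely forced to meet those lines somewhere, not at $p$, and nothing in your sketch rules out a maximum 2-packing avoiding all lines through $p$. Since both branches of your argument terminate in a step you explicitly defer, the proposal does not establish the theorem; it identifies the right framework but leaves the core difficulty untouched.
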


In this paper, we improve the upper bound given in Theorem \ref{teo_anterior} when $r=\nu_2$.

\begin{teo}
Any intersecting $\nu_2$-uniform straight line system with $r\geq\nu_2$ satisfies $\tau\leq\frac{2}{3}(\nu_2+1)$.	
\end{teo}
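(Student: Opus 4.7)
The plan is to prove a reduction lemma and then iterate it. The reduction lemma I aim to establish is: in every intersecting $r$-uniform straight line system with $r \geq \nu_2 \geq 5$, there exist two points $v, w$ such that the sub-system $\mathcal{L}^{\star} := \{L \in \mathcal{L} : v \notin L,\ w \notin L\}$ satisfies $\nu_2(\mathcal{L}^{\star}) \leq \nu_2 - 3$. Granting the lemma, the theorem follows by applying it $q := \lceil (\nu_2 - 4)/3 \rceil$ times, each iteration adding two points to the transversal and lowering the $2$-packing number of the current sub-system by at least three, until that number falls into $\{2, 3, 4\}$; Corollary~\ref{coro:nu_2=4} then yields at most $s - 1$ further transversal points in the terminal sub-system. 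Writing $\nu_2 = 3k + s$ with $s \in \{2, 3, 4\}$, the total transversal has size at most $2k + s - 1$, and a direct check gives $2k + s - 1 \leq 2k + \frac{2(s + 1)}{3} = \frac{2}{3}(\nu_2 + 1)$ for $s \in \{2, 3, 4\}$. Note that the sub-systems stay straight-line, intersecting, and $r$-uniform with $r \geq \nu_2^{(i)}$, so the lemma does apply at every step.

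To produce the reducing pair, I fix a maximum $2$-packing $R = \{L_1, \ldots, L_{\nu_2}\} \subseteq \mathcal{L}$. Since $R$ is a $2$-packing in a straight line system, no three of its lines are concurrent in $\mathbb{R}^2$, so the $\binom{\nu_2}{2}$ pairwise intersection points of $R$ are distinct. Moreover, by the maximality of $R$, every line $L \in \mathcal{L} \setminus R$ must pass through at least one of these intersection points (otherwise $R \cup \{L\}$ would be a larger $2$-packing). I then choose a triangle $\{L_a, L_b, L_c\} \subseteq R$ and set $v := L_a \cap L_b$ and $w := L_a \cap L_c$. These two points cover $L_a, L_b, L_c$, and the surviving $\nu_2 - 3$ lines of $R \setminus \{L_a, L_b, L_c\}$ form a $2$-packing of $\mathcal{L}^{\star}$, so $\nu_2(\mathcal{L}^{\star}) \geq \nu_2 - 3$ is immediate.

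The principal obstacle is the reverse inequality $\nu_2(\mathcal{L}^{\star}) \leq \nu_2 - 3$. If it failed for a given triangle, there would be a $2$-packing $R' \subseteq \mathcal{L}^{\star}$ of size $\nu_2 - 2$; a short combinatorial analysis (using that $R$ is a maximum $2$-packing of $\mathcal{L}$) shows $R'$ must take the form $(R \setminus \{L_a, L_b, L_c\}) \cup \{L^{\star}\}$ for some $L^{\star} \in \mathcal{L} \setminus R$ that avoids $v, w$ yet is forced to pass through a further intersection point of $R$ of the form $L_a \cap L_j$, $L_b \cap L_j$, $L_c \cap L_j$ or $L_b \cap L_c$ with $j \notin \{a,b,c\}$. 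The heart of the argument is to show that the triangle $\{L_a, L_b, L_c\}$ can always be selected so as to preclude every such $L^{\star}$. I anticipate achieving this by an extremal or averaging argument ranging over the $\binom{\nu_2}{3}$ triangles of $R$, combined with the straight-line rigidity of $\mathbb{R}^2$ (any two distinct straight lines share at most one point), which tightly restricts how a non-$R$ line can pass through intersection points of $R$. Closing this case analysis is the main technical difficulty.
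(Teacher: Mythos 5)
Your proposal is incomplete at its core. Everything rests on the reduction lemma (for every intersecting $r$-uniform straight line system with $r\geq\nu_2\geq5$ there are two points $v,w$ whose removal of incident lines drops the $2$-packing number by at least $3$), and you do not prove it: you explicitly defer the selection of a triangle that precludes every bad line $L^{\star}$ to an ``anticipated'' extremal or averaging argument, i.e.\ the main technical step is left open. The iteration scheme and the arithmetic $2k+s-1\leq\frac{2}{3}(\nu_2+1)$ for $s\in\{2,3,4\}$ (using Corollary~\ref{coro:nu_2=4} at the end) are fine, but they are conditional on an unestablished claim. Moreover, the one concrete step you sketch toward the lemma is not valid: from the failure of $\nu_2(\mathcal{L}^{\star})\leq\nu_2-3$ you assert that a $2$-packing $R'$ of size $\nu_2-2$ in $\mathcal{L}^{\star}$ ``must take the form $(R\setminus\{L_a,L_b,L_c\})\cup\{L^{\star}\}$''. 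That does not follow. Maximality of $R$ in $\mathcal{L}$ only forces each line of $\mathcal{L}\setminus R$ to pass through some pairwise intersection point of $R$; it gives no control over which lines make up a large $2$-packing of the subsystem, and $R'$ need not contain the surviving lines of $R$ at all --- it may be built largely or entirely from lines outside $R$. So even the dichotomy you set up for the ``reverse inequality'' is not the right one, and the lemma itself remains an open claim rather than a proved step.

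For contrast, the paper's argument is global rather than local: since a straight line system is Zykov-planar, its Levi graph is planar with girth $6$ when $\nu_2\geq3$, and an Euler-type edge count gives the bound $|\mathcal{L}|\leq\frac{3\nu_2+1}{2}$ of Theorem~\ref{teo:nu_uniform}; the transversal bound of Theorem~\ref{teo:main_2} is then extracted from this cap on the number of lines, together with the intersecting property, by a residue-mod-$3$ case analysis. Your deletion scheme never invokes planarity or any bound on $|\mathcal{L}|$, and it is exactly this kind of global input that would seem necessary to make a ``good triangle always exists'' statement provable. If you want to salvage your route, you must either prove the reduction lemma in full (including handling $2$-packings of $\mathcal{L}^{\star}$ that are not of your assumed form), or import the line-count bound and argue directly from it, which is essentially the paper's approach.
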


And we present some others interesting results. Also, we present some consequences for certain intersecting subsystems of uniform straight line systems called \emph{segment systems}.

\section{Main results}

In this section we present the main results of the paper. Before to this, we need some definitions and results.

According to \cite{Kaufmann:2009:SDH:1506879.1506920}, any straight line system is Zykov-planar, see also \cite{MR0401556}. Zykov proposed to represent the lines of a set system by a subset of the faces of a planar map on $\mathbb{R}^2$, i.e., a set system $(X,\mathcal{F})$ is Zykov-planar if there exists a planar graph $G$ (not necessarily a simple graph) such that $V(G)=X$ and $G$ can be drawn in the plane with faces of $G$ two-colored (say red and blue) so that there exists a bijection between the red faces of $G$ and the subsets of $\mathcal{F}$ such that a point $x$ is incident with a red face if and only if it is incident with the corresponding subset. Walsh in \cite{MR0360328} shown the Zykov's definition is equivalent to the following: A set system $(X,\mathcal{F})$ is Zykov-planar if and only if the Levi graph $B(X,\mathcal{F})$ is planar. It is well known for any planar graph $G$ the size of $G$, $|E(G)|$,  is upper bounded by $\frac{\kappa(|V(G)|-2)}{\kappa-2}$ (see for example \cite{Bondy}), where $\kappa$ is the \emph{girth} of $G$ (the length of a shortest cycle contained in the graph $G$).

The \emph{Levi graph} of a linear system $(P,\mathcal{L})$, $B(P,\mathcal{L})$, is a bipartite graph with vertex set
$V=P\cup\mathcal{L}$, where two vertices $p\in P$, and $L\in\mathcal{L}$ are adjacent if and only if $p\in L$.

\begin{teo}\label{teo:nu_uniform}
If $(P,\mathcal{L})$ is an intersecting $\nu_2$-uniform straight line system with $\nu_2\geq3$, then $|\mathcal{L}|\leq\frac{3\nu_2+1}{2}$.	
\end{teo}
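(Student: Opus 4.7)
The plan is to combine the planarity of the Levi graph with the intersecting hypothesis. Since $(P,\mathcal{L})$ is a straight line system, its Levi graph $B(P,\mathcal{L})$ is planar (by the Walsh characterization of Zykov-planarity cited earlier in the paper). Because $B$ is bipartite and the system is linear, $B$ has girth at least six: bipartiteness kills odd cycles, and a $4$-cycle would force two lines to share two points. The Euler-formula bound for planar graphs with girth at least $6$ then gives $|E(B)|\leq \frac{3(|V(B)|-2)}{2}$, and substituting $|E(B)|=\nu_2 m$ (from $\nu_2$-uniformity) and $|V(B)|=|P|+m$ rearranges to
\[
3\,|P| \;\geq\; (2\nu_2-3)m+6. \qquad (\star)
\]

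For the companion upper bound on $|P|$, I would use the intersecting identity $\sum_p \binom{d_p}{2}=\binom{m}{2}$ together with the degree bound $d_p\leq \nu_2$. The latter holds because, since $\nu_2\geq 3$, not every line passes through $p$, so fixing a line $L_0\not\ni p$ the distinct-intersection-points argument forces $d_p\leq |L_0|=\nu_2$. A short manipulation of the two identities $\sum_p d_p=\nu_2 m$ and $\sum_p\binom{d_p}{2}=\binom{m}{2}$ yields
\[
|P| \;=\; \nu_2 m - \binom{m}{2} + \sum_{k=3}^{\nu_2}\binom{k-1}{2}\, n_k,
\]
where $n_k=|\{p:d_p=k\}|$. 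Bounding $\binom{k-1}{2}\leq \frac{\nu_2-2}{\nu_2}\binom{k}{2}$ pointwise for $3\leq k\leq \nu_2$ and using $\sum_{k\geq 2}\binom{k}{2}n_k=\binom{m}{2}$ gives $|P|\leq \nu_2 m-\frac{m(m-1)}{\nu_2}$.

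Combining $(\star)$ with this upper bound on $|P|$ produces a quadratic in $m$ of the shape $3m^2-(\nu_2^2+3\nu_2+3)m+6\nu_2\leq 0$, which already gives a bound of order $\nu_2^2$. To push this down to the sharp linear bound $m\leq(3\nu_2+1)/2$, I would exploit that $\nu_2$ is the \emph{maximum} $2$-packing size: fixing a maximum $2$-packing $R\subseteq\mathcal{L}$ with $|R|=\nu_2$, every line $L\in\mathcal{L}\setminus R$ is forced through at least one internal intersection point $p_{ij}$ of $R$ (otherwise $R\cup\{L\}$ would be a larger $2$-packing), and the degree bound $d_{p_{ij}}\leq \nu_2$ then limits how many exterior lines can cluster on the $\binom{\nu_2}{2}$ internal points. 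The main obstacle is precisely this sharpening step: merging the face-length information from Euler's formula with the structural constraint that exterior lines anchor on internal intersection points of a maximum $2$-packing, in order to close the gap between the quadratic bound that planarity and $d_p\leq \nu_2$ readily provide and the linear bound $(3\nu_2+1)/2$ claimed in the theorem.
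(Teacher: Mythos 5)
Your first half coincides with the paper's argument: planarity of the Levi graph $B(P,\mathcal{L})$, girth at least $6$, and the Euler-type bound give exactly $2\nu_2 m\leq 3(|P|+m-2)$, i.e.\ your inequality $(\star)$. The divergence, and the genuine gap, is in how $|P|$ is controlled. To turn $(\star)$ into a bound on $m$ one needs an upper bound on $|P|$ that depends on $\nu_2$ alone; the paper uses $|P|\leq\nu_2^2-\nu_2+1$ (valid for intersecting $\nu_2$-uniform systems), and substituting this into $(\star)$ immediately gives $m\leq\frac{3(\nu_2^2-\nu_2-1)}{2\nu_2-3}$, which together with the integrality of $m$ yields $m\leq\frac{3\nu_2+1}{2}$. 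Your substitute bound $|P|\leq\nu_2 m-\frac{m(m-1)}{\nu_2}$, obtained from $\sum_p d_p=\nu_2 m$, $\sum_p\binom{d_p}{2}=\binom{m}{2}$ and $d_p\leq\nu_2$, is correct but grows with $m$, so feeding it back into $(\star)$ can only produce the quadratic inequality $3m^2-(\nu_2^2+3\nu_2+3)m+6\nu_2\leq0$, i.e.\ $m=O(\nu_2^2)$, which is far from the claimed $\frac{3\nu_2+1}{2}$.

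You identify this yourself ("the main obstacle is precisely this sharpening step"), but that step is not a refinement one can wave at: it is the actual content of the theorem beyond the planarity count. The $2$-packing idea you sketch (every line outside a maximum $2$-packing $R$ must pass through an intersection point of two lines of $R$, and $d_p\leq\nu_2$ limits clustering) is indeed the natural route toward a point bound of the paper's type, i.e.\ a bound on $|P|$ by roughly $\nu_2^2$ independent of $m$, but as written it is not carried out, so the proposal does not prove the statement. (As a side remark, even in the paper's route some care is needed at the very end: the displayed inequality $\frac{3(\nu_2^2-\nu_2-1)}{2\nu_2-3}\leq\frac{3\nu_2+1}{2}$ is literally true only for $\nu_2\leq3$, and for larger $\nu_2$ one should invoke that $m$ is an integer to land below $\frac{3\nu_2+1}{2}$; but this is a cosmetic repair, whereas the missing bound on $|P|$ in your argument is not.)
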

\begin{proof}
	Let $B=B(P,\mathcal{L})$ be the Levi graph of $(P,\mathcal{L})$. Since the linear system is a straight line system, then the Levi graph $B$ is a planar graph. It is not hard to see the size of the girth of $B$ is $6$ (since $\nu_2\geq3$). We have $|P|\leq \nu_2^2-\nu_2+1$. Hence $$\frac{\kappa(|V(B)|-2)}{\kappa-2}=\frac{3}{2}(|P|+|\mathcal{L}|-2)=\frac{3}{2}(\nu_2^2-\nu_2+|\mathcal{L}|-1).$$ On the other hand, since every line has $\nu_2$ points, then $|E(B)|=\nu_2|\mathcal{L}|$. Since the Levi graph is planar, then 
	$\nu_2|\mathcal{L}|\leq\frac{3}{2}(|P|+|\mathcal{L}|-2)$, which implies that
	\begin{equation*}
	|\mathcal{L}|\leq\frac{3(\nu_2^2-\nu_2-1)}{2\nu_2-3}\leq\frac{3\nu_2+1}{2}.
	\end{equation*}	
\end{proof}

Let $(P_{\nu_2},\mathcal{L}_{\nu_2})$ be a $\nu_2$-uniform linear system. The $r$-uniform linear system $(P_r,\mathcal{L}_r)$, with $r+1\geq\nu_2$, is \emph{$\nu_2$-isomorphic} to $(P_{\nu_2},\mathcal{L}_{\nu_2})$, $(P_r,\mathcal{L}_r)\simeq_{\nu_2}(P_{\nu_2},\mathcal{L}_{\nu_2})$, if after of either adding or deleting some points of degree 1 from $(P_r,\mathcal{L}_r)$ the resultant linear systems is isomorphic to $(P_{\nu_2},\mathcal{L}_{\nu_2})$. Notice that, if $(P_r,\mathcal{L}_r)\simeq_{\nu_2}(P_{\nu_2},\mathcal{L}_{\nu_2})$, then (trivially) $\nu_2(P_r,\mathcal{L}_r)=\nu_2$.

\begin{coro}\label{coro:iso_nu_2}
Let $(P_{\nu_2},\mathcal{L}_{\nu_2})$ be an intersecting $\nu_2$-uniform straight line system with $\nu_2\geq3$. If $(P_r,\mathcal{L}_r)\simeq_{\nu_2}(P_{\nu_2},\mathcal{L}_{\nu_2})$, then $|\mathcal{L}_r|=|\mathcal{L}_{\nu_2}|\leq\frac{3\nu_2+1}{2}$.	
\end{coro}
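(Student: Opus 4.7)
The plan is to reduce the statement directly to Theorem \ref{teo:nu_uniform} by exploiting the fact that the operation relating $(P_r,\mathcal{L}_r)$ and $(P_{\nu_2},\mathcal{L}_{\nu_2})$ only modifies degree-$1$ points and therefore preserves the number of lines.

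First I would unfold the definition of $\simeq_{\nu_2}$: by hypothesis, starting from $(P_r,\mathcal{L}_r)$ and either adding or deleting finitely many points of degree $1$ we obtain a linear system isomorphic to $(P_{\nu_2},\mathcal{L}_{\nu_2})$. Since adding or deleting a point of degree $1$ affects exactly one line (by removing or inserting a single incidence) and does not create or destroy any line of $\mathcal{L}$, the line-set cardinality is invariant under this operation. Combined with the fact that isomorphisms of linear systems induce bijections between line-sets, this yields $|\mathcal{L}_r|=|\mathcal{L}_{\nu_2}|$.

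Next I would apply Theorem \ref{teo:nu_uniform} directly to $(P_{\nu_2},\mathcal{L}_{\nu_2})$, which is by assumption an intersecting $\nu_2$-uniform straight line system with $\nu_2\geq 3$, to obtain $|\mathcal{L}_{\nu_2}|\leq \tfrac{3\nu_2+1}{2}$. Chaining this with the previous equality produces the claimed bound $|\mathcal{L}_r|=|\mathcal{L}_{\nu_2}|\leq \tfrac{3\nu_2+1}{2}$.

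The argument is essentially bookkeeping; I do not anticipate a genuine obstacle. The only subtlety worth verifying carefully is that the $\nu_2$-isomorphism really does preserve $|\mathcal{L}|$ (and not, for instance, merge or split lines). This is immediate because degree-$1$ points belong to a unique line, so deleting them only shrinks that line by one point (and correspondingly changes the uniformity from $\nu_2$ to $r$ when enough such points are present), while never eliminating a line from the family; symmetrically, adding a degree-$1$ point to some line enlarges that line but creates no new line. Thus the corollary follows at once.
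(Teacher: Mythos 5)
Your argument is correct and is exactly the reasoning the paper intends (the corollary is stated without proof as an immediate consequence of Theorem \ref{teo:nu_uniform}): the $\nu_2$-isomorphism only adds or deletes degree-$1$ points, so $|\mathcal{L}_r|=|\mathcal{L}_{\nu_2}|$, and the bound follows by applying the theorem to $(P_{\nu_2},\mathcal{L}_{\nu_2})$. Nothing further is needed.
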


\begin{teo}\label{teo:main_2}
If $(P,\mathcal{L})$ is an intersecting $\nu_2$-uniform straight line system with $\nu_2\geq3$, then $\tau\leq\frac{2}{3}(\nu_2+1)$.	
\end{teo}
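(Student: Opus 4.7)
The plan is to combine the sharp bound $|\mathcal{L}|\leq\frac{3\nu_2+1}{2}$ from Theorem~\ref{teo:nu_uniform} with an explicit transversal built from a maximum 2-packing. First I would fix a maximum 2-packing $R\subseteq\mathcal{L}$ with $|R|=\nu_2$ and set $S=\mathcal{L}\setminus R$, so that $|S|\leq\frac{\nu_2+1}{2}$. Because no three lines of $R$ share a point, the $\binom{\nu_2}{2}$ pairwise intersection points of $R$-lines are all distinct, and each lies on exactly two lines of $R$; in particular, a single such intersection point covers two lines of $R$ at once.

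The extra lines give a strictly better cover ratio. By maximality of $R$, for every $s\in S$ the family $R\cup\{s\}$ is not a 2-packing, and since $R$ has no common-point triple, $s$ must contain a \emph{witness point} $p(s)\in L\cap L'\cap s$ with $L,L'\in R$. Placing $p(s)$ in the transversal then covers three lines of $\mathcal{L}$ at once, at the favourable ratio $\tfrac{1}{3}$ point per line rather than the $\tfrac{1}{2}$ of a pure $R$-intersection point.

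I would build the transversal by selecting a maximum ``witness matching'' $M$: for as many $s\in S$ as possible, commit to a witness pair $\{L_1(s),L_2(s)\}\subseteq R$, arranged so that the chosen pairs are pairwise disjoint in $R$. If $|M|=m$, the $m$ corresponding witness points cover $m$ lines of $S$ and $2m$ lines of $R$, after which the remaining $\nu_2-2m$ lines of $R$ are covered by at most $\lceil(\nu_2-2m)/2\rceil$ intersection points of $R$. In the regime $m=|S|$ the total is exactly $\lceil\nu_2/2\rceil$, which is at most $\frac{2(\nu_2+1)}{3}$ for every $\nu_2\geq 3$.

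The main obstacle is the complementary regime $m<|S|$: maximality of $M$ forces the witness pair of every unmatched $s$ to overlap some pair in $M$ on at least one $R$-line, so a na\"\i ve accounting appears to spend one extra point per unmatched $s$. The key observation, however, is that this overlap is a gift rather than a loss: whenever two unmatched lines $s,s'\in S$ select the same witness pair $\{L,L'\}$, both must pass through the unique point $L\cap L'$, and that single point covers $L$, $L'$, $s$ and $s'$ simultaneously. Converting this clustering into a short case analysis on the parity of $\nu_2$ and on whether $|S|$ saturates $\frac{\nu_2+1}{2}$ should then yield $\tau\leq\frac{2(\nu_2+1)}{3}$. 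Finally, the small values $\nu_2\in\{3,4,5\}$ require no new argument, since Theorem~\ref{teo_anterior} already gives $\tau\leq\nu_2-1\leq\frac{2(\nu_2+1)}{3}$ for these; the genuinely new content lies in the range $\nu_2\geq 6$.
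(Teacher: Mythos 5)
Your setup is sound: by Theorem~\ref{teo:nu_uniform} indeed $|S|=|\mathcal{L}|-\nu_2\leq\frac{\nu_2+1}{2}$, maximality of the 2-packing $R$ does force every $s\in S$ through a point $L\cap L'$ with $L,L'\in R$, the regime $m=|S|$ gives $\lceil\nu_2/2\rceil\leq\frac{2}{3}(\nu_2+1)$, and the reduction of $\nu_2\in\{3,4,5\}$ to Theorem~\ref{teo_anterior} is correct. The problem is precisely the regime $m<|S|$ that you defer to a ``short case analysis''. Maximality of your witness matching $M$ only guarantees that the witness pair of an unmatched $s$ meets the lines used by $M$ in \emph{at least one} element of $R$; it does not make distinct unmatched lines share a full pair, so your ``key observation'' (one point covering $L,L',s,s'$) handles only a special sub-case. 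Consider the star-like pattern in which every $s_i\in S$ has its only witness point on one fixed line $L_0\in R$, with witness pairs $\{L_0,L_i\}$ all distinct: then $m=1$, no two lines of $S$ share a witness pair, and your accounting spends one point per line of $S$ (each such point re-covers $L_0$) plus $\lceil(\nu_2-|S|-1)/2\rceil$ points for the untouched part of $R$, i.e.\ roughly $\frac{3\nu_2}{4}$ points when $|S|$ is close to $\frac{\nu_2}{2}$. That exceeds $\frac{2}{3}(\nu_2+1)$ as soon as $\nu_2$ is moderately large (already around $\nu_2\geq 15$), so the bound does not follow from the plan as stated. To close the gap you would have to either show that such overlap patterns cannot occur in an intersecting $\nu_2$-uniform straight line system (this needs the geometric and uniformity hypotheses, not just maximality of $R$), or cover the lines of $S$ by a genuinely different mechanism when their witness pairs form stars; neither is addressed, and this is the actual content of the theorem rather than a routine parity check.

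Note also that you cannot borrow the missing cases from the paper: its proof does not introduce a witness matching at all, but argues (itself only in sketch form) directly from the cardinality bound $|\mathcal{L}|\leq\frac{3\nu_2+1}{2}$ of Theorem~\ref{teo:nu_uniform}, splitting on the parity of $\nu_2$ and the residue of $\frac{\nu_2}{2}$ (resp.\ $\frac{\nu_2+1}{2}$) modulo 3. So your route is different in structure, and its central combinatorial step remains unproved.
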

\begin{proof}
	Suppose that $\nu_2\geq2$ is an even integer. By Theorem \ref{teo:nu_uniform}
	\begin{equation*}
	|\mathcal{L}|\leq\frac{3}{2}\nu_2=\nu_2+\frac{\nu_2}{2}.
	\end{equation*}
	Suppose that $\frac{\nu_2}{2}=3l+i$, for some $l\in\mathbb{Z}$ and $i\in\{0,1,2\}$. Analyzing all cases for $i$, it can be proved that $\tau\leq\frac{2\nu_2+1}{3}$ (see case $i=2$).
	
	On the other hand, suppose that $\nu_2\geq3$ is an odd integer. By Theorem \ref{teo:nu_uniform}
	\begin{equation*}
	|\mathcal{L}|\leq\nu_2+\frac{\nu_2+1}{2}.
	\end{equation*}
	Suppose that $\frac{\nu_2+1}{2}=3l+i$, for some $l\in\mathbb{Z}$ and $i\in\{0,1,2\}$. Analyzing all cases for $i$, it can be proved that $\tau\leq\frac{2}{3}(\nu_2+1)$ (see case $i=1$).
\end{proof}

\begin{coro}\label{coro:main}
Let $(P,\mathcal{L})$ be an intersecting $\nu_2$-uniform straight line system with $|\mathcal{L}|>\nu_2$ and $\nu_2\geq3$ an integer. If $\Delta\geq\lfloor\frac{\nu_2}{2}\rfloor+2$ then $\tau=\lceil\frac{\nu_2}{2}\rceil$. 
\end{coro}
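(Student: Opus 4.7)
The lower bound $\tau\geq\lceil\nu_2/2\rceil$ is immediate from the universal inequality \eqref{desigualdad}, so the entire task is to produce a transversal of size at most $\lceil\nu_2/2\rceil$. My plan is to center the transversal on a vertex $p$ of maximum degree $\Delta\geq\lfloor\nu_2/2\rfloor+2$. Writing $\mathcal{L}_p$ for the $\Delta$ lines through $p$ and $\mathcal{L}'=\mathcal{L}\setminus\mathcal{L}_p$ for the rest, the single point $p$ already pierces $\mathcal{L}_p$, so it suffices to cover $\mathcal{L}'$ using at most $\lceil\nu_2/2\rceil-1$ additional points.

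The first step is to bound $|\mathcal{L}'|$ using Theorem \ref{teo:nu_uniform}, which gives $|\mathcal{L}|\leq(3\nu_2+1)/2$. Subtracting $\Delta\geq\lfloor\nu_2/2\rfloor+2$ and splitting by parity of $\nu_2$, a short floor/ceiling computation yields $|\mathcal{L}'|\leq\nu_2-1$ when $\nu_2$ is odd and $|\mathcal{L}'|\leq\nu_2-2$ when $\nu_2$ is even. This is exactly the regime where each additional transversal point needs to cover only two of the remaining lines on average.

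The second step exploits that $\mathcal{L}'$ is still an intersecting family, so any two of its lines meet in a unique point. I would fix an arbitrary pairing of the elements of $\mathcal{L}'$ (with one leftover line if $|\mathcal{L}'|$ is odd), take the common point of each pair, and, if necessary, any single point on the leftover line. This produces a set $S$ covering $\mathcal{L}'$ with $|S|\leq\lceil|\mathcal{L}'|/2\rceil$, so $\{p\}\cup S$ is a transversal of size $1+\lceil|\mathcal{L}'|/2\rceil$. Plugging in the two parity bounds on $|\mathcal{L}'|$, this evaluates to exactly $\lceil\nu_2/2\rceil$ in both cases, matching the lower bound.

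I do not expect a genuine obstacle. The hypothesis $\Delta\geq\lfloor\nu_2/2\rfloor+2$ is calibrated so that the ``star at $p$ plus pairing'' strategy just barely closes, so the only real care is in handling the floor/ceiling arithmetic cleanly across the two parities of $\nu_2$; the construction itself is essentially forced once the bound on $|\mathcal{L}'|$ is in hand, because the pairwise-intersection property makes selecting the shared point of any pair always feasible.
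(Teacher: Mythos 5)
Your proposal is correct and follows essentially the same route as the paper: the lower bound comes from (\ref{desigualdad}), and the upper bound is obtained by letting one point of degree $\Delta\geq\lfloor\frac{\nu_2}{2}\rfloor+2$ absorb the lines through it and covering the remaining at most $\nu_2-2$ (for $\nu_2$ even) or $\nu_2-1$ (for $\nu_2$ odd) lines two at a time via their pairwise intersection points, with Theorem \ref{teo:nu_uniform} supplying the bound on $|\mathcal{L}|$. Your write-up merely makes explicit the pairing argument that the paper leaves implicit in the inequality $\tau\leq\frac{\nu_2-2}{2}+\frac{\frac{\nu_2}{2}+2}{\Delta}$ and its odd counterpart.
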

\begin{proof}
Suppose that $\nu_2\geq4$ is an even integer. By Theorem \ref{teo:nu_uniform}
\begin{equation*}
|\mathcal{L}|\leq\nu_2+\frac{\nu_2}{2}=\nu_2-2+\frac{\nu_2}{2}+2.
\end{equation*}
Hence
\begin{equation*}
\tau\leq\frac{\nu_2-2}{2}+\frac{\frac{\nu_2}{2}+2}{\Delta}\leq\frac{\nu_2}{2}=\lceil\frac{\nu_2}{2}\rceil.
\end{equation*}
On the other hand, suppose that $\nu_2\geq3$ is an odd integer. By Theorem \ref{teo:nu_uniform}
\begin{equation*}
|\mathcal{L}|\leq\nu_2+\frac{\nu_2+1}{2}=\nu_2-1+\frac{\nu_2-1}{2}+2.
\end{equation*}
Hence
\begin{equation*}
\tau\leq\frac{\nu_2-1}{2}+\frac{\frac{\nu_2-1}{2}+2}{\Delta}\leq\frac{\nu_2+1}{2}=\lceil\frac{\nu_2}{2}\rceil.
\end{equation*}
Therefore, by Equation (\ref{desigualdad}), we have
\begin{equation*}
\lceil\frac{\nu_2}{2}\rceil\leq\tau\leq\lceil\frac{\nu_2}{2}\rceil,
\end{equation*} 
which implies $\tau=\lceil\frac{\nu_2}{2}\rceil$.
\end{proof}

\section{$r$-segment systems} 
Let $r\geq2$ be an integer. A linear system $(P,\mathcal{L})$, where $P\subseteq\mathbb{Z}^2$, is an \emph{$r$-segment system} if every line in $\mathcal{L}$ consists of $r$ consecutive integer points on some line in $\mathbb{R}^2$, and every line in $\mathbb{R}^2$ contains at most one line of $\mathcal{L}$, see \cite{Oliveros}. It is easy to see that any $r$-segment system is an $r$-uniform straight line system, see \cite{Oliveros}. An immediate consequence of this definition is the following:

\begin{coro}\label{coro:segment_subsystem}
Let $(P,\mathcal{L})$ be an intersecting $r$-segment system and $l\in\mathcal{L}$ any line. If $(P',\mathcal{L}')$ is the linear subsystem induced by $\mathcal{L}\setminus\{l\}$ taking away the points of degree 0, then $(P',\mathcal{L}')$ is an intersecting $r$-segment system. 
\end{coro}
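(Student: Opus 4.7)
The plan is essentially a definitional verification: I will check that each clause in the definition of \emph{intersecting $r$-segment system} is preserved when we remove a single line $l$ from $\mathcal{L}$ and clean up the resulting degree-$0$ points. Since this is a direct inheritance argument, I do not expect any real obstacle; the only subtlety is to be sure that the isolated-point deletion step does not accidentally strip away a point of some surviving line.

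First I would record that $P'\subseteq P\subseteq\mathbb{Z}^2$, so the ambient integer-lattice requirement is automatic. Next, for each $l'\in\mathcal{L}\setminus\{l\}$, every point of $l'$ has degree at least $1$ in the subsystem induced by $\mathcal{L}\setminus\{l\}$ (it is incident with $l'$ itself), so no point of $l'$ is deleted during the cleanup. Hence $l'$ remains a set of $r$ consecutive integer points lying on the same supporting line of $\mathbb{R}^2$ that it used in $(P,\mathcal{L})$, giving the $r$-segment condition for $\mathcal{L}'$.

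Then I would check the uniqueness of the supporting line: since $\mathcal{L}'=\mathcal{L}\setminus\{l\}\subseteq\mathcal{L}$, any real line of $\mathbb{R}^2$ carrying two distinct members of $\mathcal{L}'$ would already carry two distinct members of $\mathcal{L}$, contradicting the hypothesis that $(P,\mathcal{L})$ is an $r$-segment system. Finally, for the intersecting property, given two distinct lines $l_1,l_2\in\mathcal{L}\setminus\{l\}$, their unique common point in $(P,\mathcal{L})$ lies in $l_1$ and therefore, by the first step, lies in $P'$; so $|l_1\cap l_2|=1$ also in $(P',\mathcal{L}')$. Collecting the four observations yields that $(P',\mathcal{L}')$ is an intersecting $r$-segment system, as claimed.
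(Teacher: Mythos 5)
Your verification is correct and matches the paper's treatment: the paper states this corollary as an immediate consequence of the definition and gives no explicit argument, and your write-up simply spells out the routine inheritance checks (lattice points, $r$ consecutive integer points per surviving line, uniqueness of supporting lines, and the intersecting property), including the one point worth noting — that no point of a surviving line is deleted in the cleanup since it has degree at least $1$.
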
  

Oliveros et al. \cite{Oliveros} proved the following (which is a simple consequence of Corollary \ref{coro:main}):
\begin{teo}\cite{Oliveros}\label{teo:oliveros}
Any intersecting $r$-segment system with $r\geq3$ have a point of degree 1. Consequently, $\tau\leq r-1$.
\end{teo}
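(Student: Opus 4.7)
My plan is to prove the existence of a degree-$1$ point first, and then derive $\tau\le r-1$ as an immediate corollary.

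\textbf{The corollary from a degree-$1$ point.} If $p\in l$ has $d(p)=1$, then $T:=l\setminus\{p\}$, of cardinality $r-1$, is a transversal: $T\cap l\ne\emptyset$ trivially, and for every other line $l'\in\mathcal{L}$ the intersecting property gives $l\cap l'=\{q\}$ with $q\ne p$ (since $d(p)=1$ forces $p\notin l'$), so $q\in T$. Hence $\tau\le r-1$.

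\textbf{Existence of a degree-$1$ point.} I argue by contradiction, supposing that every point of $(P,\mathcal{L})$ has degree at least $2$. A pencil trivially contains degree-$1$ points (the non-apex points), so we may assume $\nu_2\ge 3$. Then the Levi graph $B(P,\mathcal{L})$ is planar, bipartite of girth $6$; the edge-count inequality used in the proof of Theorem~\ref{teo:nu_uniform}, combined with $|E(B)|=r|\mathcal{L}|$ and $|P|\le r|\mathcal{L}|/2$ (the latter from $\sum_{p}d(p)=r|\mathcal{L}|$ and $d(p)\ge 2$), yields
\[
(r-6)\,|\mathcal{L}|\le -12,
\]
which is an immediate contradiction for every $r\ge 6$.

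For $r\in\{3,4,5\}$ I would push the argument further, combining the universal bound $\nu_2\le r+1$ (in any intersecting $r$-uniform linear system, the lines of a $2$-packing through a fixed $l$ meet $l$ at distinct points, else three lines would share a point) with the paper's standing hypothesis $|\mathcal{L}|>\nu_2$, Corollary~\ref{coro:main} and Theorem~\ref{teo:main_2}. When $\nu_2\le r$, the system (or its $\nu_2$-skeleton via $\simeq_{\nu_2}$) is $\nu_2$-uniform, so Corollary~\ref{coro:main} or Theorem~\ref{teo:main_2} gives a small transversal; this, together with the minimum-degree-$2$ hypothesis, forces some extremal segment of $\mathbb{Z}^2$ (taken, say, as the leftmost-then-lowest segment in the configuration) to have a free endpoint, contradicting the hypothesis. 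In the extremal subcase $\nu_2=r+1$, the $2$-packing of $r+1$ lines is already rigid: every point of that subconfiguration has degree exactly $2$, and the subconfiguration is $K_{r+1}$-like, which I would rule out as a realizable $r$-segment subsystem of $\mathbb{Z}^2$ by an elementary integer-collinearity check.

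The main obstacle is precisely this last step for $r\in\{3,4,5\}$: excluding the rigid $K_{r+1}$-type subconfiguration from being realized as consecutive integer points on each of $r+1$ segments in $\mathbb{Z}^2$. The argument is arithmetic in nature rather than combinatorial, and is the step most vulnerable to an overlooked realization.
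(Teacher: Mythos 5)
The paper gives no proof of this statement at all: it is quoted from Oliveros et al.\ \cite{Oliveros} (with only the parenthetical remark that it follows from Corollary \ref{coro:main}), so your attempt has to stand on its own. Parts of it do: the deduction of $\tau\le r-1$ from a degree-$1$ point is correct, the pencil case ($\nu_2\le 2$, where all lines pass through one common point) is fine, and the Levi-graph count for $r\ge 6$ is a clean contradiction --- assuming minimum degree $2$ gives $|P|\le r|\mathcal{L}|/2$, and planarity with girth $6$ then indeed yields $(r-6)|\mathcal{L}|\le -12$.

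But the theorem is asserted for every $r\ge 3$, and for $r\in\{3,4,5\}$ you offer a plan rather than a proof, as you yourself concede; this is a genuine gap, and it sits exactly where your counting argument provably cannot reach (your own inequality is vacuous for $r\le 5$). The sketched fallback also does not go through as described: Theorem \ref{teo:main_2} and Corollary \ref{coro:main} apply only to $\nu_2$-uniform systems (i.e.\ $r=\nu_2$), and they conclude bounds on $\tau$ (in the corollary, under an extra hypothesis $\Delta\ge\lfloor\nu_2/2\rfloor+2$) --- neither provides any mechanism for producing a point of degree $1$, and passing to a $\nu_2$-isomorphic subsystem can destroy the minimum-degree hypothesis you are trying to contradict. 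The assertion that a transversal bound plus minimum degree $2$ ``forces some extremal segment to have a free endpoint'' is exactly the claim to be proved, not an argument for it; and ruling out the rigid $\nu_2=r+1$ configurations (e.g.\ four pairwise-crossing $3$-segments with no three concurrent, every point of degree $2$) as subsets of consecutive lattice points is precisely the arithmetic/geometric work that is left undone. In short, the cases $r=3,4,5$, which carry the real content of the theorem and which \cite{Oliveros} handles by arguing directly with the geometry of the segments, are missing from your proof.
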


As a simple consequence of Theorem \ref{teo:oliveros} and Corollary \ref{coro:segment_subsystem} is the following:

\begin{coro}
Let $(P,\mathcal{L})$ be an intersecting $r$-uniform linear system with $\nu_2\geq3$. If $r=\nu_2-1$, then $(P,\mathcal{L})$ can be isomorphic to an intersecting $r$-segment system.
\end{coro}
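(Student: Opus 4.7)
The plan is to combine a tight structural analysis of $(P,\mathcal{L})$ with an explicit geometric construction, using Corollary~\ref{coro:segment_subsystem} and Theorem~\ref{teo:oliveros} as guides.

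First, I would pin down the combinatorial structure forced by the hypothesis. Fix a $2$-packing $R\subseteq\mathcal{L}$ with $|R|=\nu_2=r+1$. Since the system is intersecting and no three lines of $R$ share a point, the $r+1$ lines of $R$ meet pairwise in $\binom{r+1}{2}$ distinct points. Each line of $R$ has exactly $r$ points and meets the other $r$ lines of $R$ in $r$ distinct points, so every point on every line of $R$ arises as an intersection of two lines of $R$. Consequently the sub-hypergraph on $R$ is combinatorially the line--vertex, point--edge incidence structure of the complete graph $K_{r+1}$, and any further line of $\mathcal{L}\setminus R$ can only re-use intersection points already present in $R$, which tightly constrains the global structure.

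Second, I would exhibit an intersecting $r$-segment realization of this same structure by induction on $r$. The base case $r=2$, $\nu_2=3$, is realized by the three sides of a lattice triangle with vertices $(0,0)$, $(1,0)$, $(0,1)$: each side is a pair of consecutive integer lattice points on its supporting line. For the inductive step, starting from an intersecting $r$-segment realization of the $K_{r+1}$-type configuration, I would add one further line of carefully chosen rational slope so that its $r$ intersections with the existing lines fall on $r$ consecutive integer lattice points along it. Corollary~\ref{coro:segment_subsystem} ensures that the class of intersecting $r$-segment systems is closed under deleting a line, and Theorem~\ref{teo:oliveros} supplies a distinguished degree-$1$ vertex at every stage, allowing the induction to proceed line-by-line. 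The resulting system then matches $(P,\mathcal{L})$ in the paper's sense of isomorphism, after deletion of points of degree $0$ or $1$.

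The main obstacle I anticipate is the inductive geometric step: forcing the $\binom{r+1}{2}$ intersection points onto $\mathbb{Z}^2$ is easy enough, but additionally requiring that the $r$ points on each line be \emph{consecutive} lattice points (rather than merely $r$ lattice points along the line) is delicate. I would address this by choosing slopes of the form $p_i/q_i$ with coordinated denominators, and then applying a single global affine rescaling from $\mathbb{Q}^2$ to $\mathbb{Z}^2$ to normalize the lattice spacing along every line to unit length, preserving all incidences simultaneously.
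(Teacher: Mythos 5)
The corollary is misstated by a typo, and your proposal follows the literal (positive) reading; read in context the intended assertion is the opposite. The statement is introduced as ``a simple consequence of Theorem \ref{teo:oliveros} and Corollary \ref{coro:segment_subsystem}'', and it is what yields the very next corollary, that every intersecting $r$-segment system with $\nu_2\geq3$ has $r\geq\nu_2$; both of these only make sense if the conclusion is that $(P,\mathcal{L})$ \emph{cannot} be isomorphic to an intersecting $r$-segment system. The positive claim you set out to prove is in fact false, and your own first paragraph contains the reason. When $r=\nu_2-1$, fix a maximum $2$-packing $R$ with $|R|=\nu_2=r+1$: each line of $R$ meets the other $r$ lines of $R$ in $r$ pairwise distinct points (no three lines of $R$ are concurrent), so \emph{every} point of every line of $R$ is such an intersection, and the subsystem induced by $R$ is the dual incidence structure of $K_{r+1}$, in which every point has degree exactly $2$. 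If $(P,\mathcal{L})$ were isomorphic to an intersecting $r$-segment system, then deleting the lines of $\mathcal{L}\setminus R$ one at a time and applying Corollary \ref{coro:segment_subsystem} at each step would leave an intersecting $r$-segment system on $R$ having no point of degree $1$, contradicting Theorem \ref{teo:oliveros} (applicable since $r=\nu_2-1\geq3$ when $\nu_2\geq4$; for $\nu_2=3$, $r=2$, the lattice triangle you exhibit is indeed realizable, which is precisely why that theorem requires $r\geq3$). This short contradiction is the paper's intended argument.

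Consequently the inductive geometric construction you outline cannot get past its first real step: already for $r=3$ you would need four $3$-segments, pairwise crossing, no three concurrent, with each segment's three integer points all being crossing points --- an intersecting $3$-segment system of minimum degree $2$, which Theorem \ref{teo:oliveros} forbids. Your appeal to that theorem as ``supplying a distinguished degree-$1$ vertex at every stage, allowing the induction to proceed'' is backwards: the $K_{r+1}$-type configuration you are trying to realize has no degree-$1$ point to supply, and that is exactly the obstruction rather than an aid. No choice of slopes or global rescaling can circumvent this, since the obstruction is purely combinatorial. The correct write-up is the one-paragraph contradiction above, with the statement corrected to ``cannot be isomorphic''.
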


\begin{coro}
If $(P,\mathcal{L})$ is an intersecting $r$-segment system with $\nu_2\geq3$, then $r\geq\nu_2$.
\end{coro}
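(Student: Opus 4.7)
The plan is to combine a short 2-packing counting inequality with the preceding corollary. I would first show that any intersecting $r$-uniform linear system with $\nu_2\geq 3$ satisfies $r\geq \nu_2-1$: fix a maximum 2-packing $\{\ell_1,\ldots,\ell_{\nu_2}\}\subseteq\mathcal{L}$; because no three of its lines share a common point, the $\nu_2-1$ intersection points $\ell_i\cap\ell_j$ (with $j\neq i$) on any fixed $\ell_i$ are pairwise distinct, and since $\ell_i$ carries exactly $r$ points we obtain $r\geq\nu_2-1$. This lower bound uses only $r$-uniformity and the intersecting/linear hypotheses, so it applies to our intersecting $r$-segment system in particular.

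To eliminate the equality case $r=\nu_2-1$, I would invoke the preceding corollary. If $r=\nu_2-1$ held, then $(P,\mathcal{L})$ would be an intersecting $r$-uniform linear system with $\nu_2\geq 3$ and $r=\nu_2-1$, and the preceding corollary prevents such a system from being isomorphic to any intersecting $r$-segment system. Since $(P,\mathcal{L})$ is by hypothesis itself an intersecting $r$-segment system (hence trivially isomorphic to one), this is a contradiction, so $r\neq\nu_2-1$. Combined with $r\geq\nu_2-1$, we conclude $r\geq\nu_2$.

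The only step requiring real work is the counting inequality, and even that is routine once one unpacks the no-three-concurrent definition of a 2-packing; the rest is a direct appeal to the preceding corollary. The main obstacle, should one try to avoid using the preceding corollary, would be to argue directly by induction on $|\mathcal{L}|$ using Theorem~\ref{teo:oliveros} to locate a line through a degree-one point and Corollary~\ref{coro:segment_subsystem} to delete that line while staying inside the class of intersecting $r$-segment systems. The difficulty in that alternative approach is to keep the 2-packing number $\nu_2$ from dropping after the deletion, which is exactly the bookkeeping that the preceding corollary allows us to bypass.
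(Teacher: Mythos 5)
Your proposal is correct and takes essentially the paper's route: the paper gives no separate argument for this corollary beyond deriving it (together with the corollary just before it) from Theorem \ref{teo:oliveros} and Corollary \ref{coro:segment_subsystem}, and your combination of the elementary 2-packing bound $r\geq\nu_2-1$ with that preceding corollary is exactly this chain repackaged. Note only that the preceding corollary as printed says ``can be isomorphic''---your contradiction requires the evidently intended reading ``cannot be isomorphic''---and that, as with that corollary itself, the paper's standing hypothesis $|\mathcal{L}|>\nu_2$ is silently needed here (otherwise the triangle, an intersecting $2$-segment system with $\nu_2=3$, would violate the statement).
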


As a simple consequence of Theorem \ref{teo:nu_uniform} and Theorem \ref{teo:main_2} are the following:

\begin{coro}\label{coro:lines_segment}
If $(P,\mathcal{L})$ is an intersecting $\nu_2$-segment system with $r\geq\nu_2\geq3$, then $|\mathcal{L}|\leq\frac{3\nu_2+1}{2}$.	
\end{coro}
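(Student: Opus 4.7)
The plan is to recognize Corollary \ref{coro:lines_segment} as a direct instance of Theorem \ref{teo:nu_uniform} in the narrower class of segment systems. The crucial link, already recorded at the start of Section 3 with attribution to Oliveros et al., is that every $r$-segment system is an $r$-uniform straight line system: the defining $r$ consecutive integer points along a common line of $\mathbb{R}^2$ already furnish a straight line representation. In particular, any intersecting $\nu_2$-segment system is an intersecting $\nu_2$-uniform straight line system with the same 2-packing number $\nu_2$.

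Granted this observation, I would argue in essentially one step. Let $(P,\mathcal{L})$ be an intersecting $\nu_2$-segment system with $\nu_2\geq 3$. By the preceding remark, view it as an intersecting $\nu_2$-uniform straight line system, and apply Theorem \ref{teo:nu_uniform} to conclude $|\mathcal{L}|\leq\frac{3\nu_2+1}{2}$.

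I do not anticipate any genuine obstacle. All of the combinatorial work --- bounding $|P|\leq\nu_2^2-\nu_2+1$, using planarity of the Levi graph together with the girth-$6$ estimate coming from $\nu_2\geq 3$, and combining these via the planar edge bound $|E(B)|\leq\frac{\kappa(|V(B)|-2)}{\kappa-2}$ --- has already been discharged inside the proof of Theorem \ref{teo:nu_uniform}. The only small check is that the corollary's hypothesis ``$r\geq\nu_2\geq 3$'' is consistent with the $\nu_2$-segment setting, where $r=\nu_2$ by definition; thus the hypothesis reduces to $\nu_2\geq 3$, which matches Theorem \ref{teo:nu_uniform} exactly. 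Hence the corollary is proved by a one-line invocation.
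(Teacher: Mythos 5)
Your proposal is correct and coincides with the paper's own (implicit) argument: the corollary is stated there as a direct consequence of Theorem \ref{teo:nu_uniform}, using precisely the observation from the start of Section 3 that every $r$-segment system is an $r$-uniform straight line system. Your extra remark that the hypothesis $r\geq\nu_2\geq3$ collapses to $\nu_2\geq3$ in the $\nu_2$-segment setting is a reasonable clarification and introduces no gap.
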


\begin{coro}\label{coro:tau_segment}
	If $(P,\mathcal{L})$ is an intersecting $\nu_2$-segment system with $\nu_2\geq3$, then $\tau\leq\frac{2}{3}(\nu_2+1)$.
\end{coro}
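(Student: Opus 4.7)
The plan is to observe that this is a direct specialization of Theorem \ref{teo:main_2} to the segment-system setting. By the definition given at the start of Section 3, every $r$-segment system is in particular an $r$-uniform straight line system (its ground set is a finite subset of $\mathbb{Z}^2\subset\mathbb{R}^2$, and each line is realized by $r$ consecutive integer points on a straight line of $\mathbb{R}^2$, so it already carries a straight line representation). Hence the hypothesis ``intersecting $\nu_2$-segment system with $\nu_2\geq 3$'' immediately gives an intersecting $\nu_2$-uniform straight line system with $\nu_2\geq 3$.

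Given this, I would simply invoke Theorem \ref{teo:main_2} applied to $(P,\mathcal{L})$, which yields
\begin{equation*}
\tau(P,\mathcal{L})\leq \tfrac{2}{3}(\nu_2+1),
\end{equation*}
which is exactly the desired bound. There is essentially no obstacle: the only content in the statement is the passage from the geometric ``segment'' hypothesis to the ``straight line system'' hypothesis, and that is exactly the observation recorded by the authors immediately before Corollary \ref{coro:segment_subsystem}. No further use of Theorem \ref{teo:nu_uniform} is needed beyond the fact that it already fed into Theorem \ref{teo:main_2}; listing both theorems in the attribution just records the chain of dependencies.
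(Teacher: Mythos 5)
Your proposal is correct and matches the paper's intent exactly: the paper states this corollary as a simple consequence of Theorem \ref{teo:main_2} (via the observation that every $r$-segment system is an $r$-uniform straight line system), which is precisely the specialization you carry out.
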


\begin{example}
The linear system depicted in Figure \ref{fig:Fano_pecas} (a) is an intersecting 4-uniform straight line system with $\nu_2=4$ (red lines); moreover this linear system is isomorphic to an intersecting 4-segment system (see Figure \ref{fig:Fano_pecas} (b)) which satisfies the transversal number obtained from Corollary \ref{coro:tau_segment}. According to \cite{Oliveros}, this is the intersecting 4-segment system with the maximum number of lines (6 lines). This number can be obtained from Corollary \ref{coro:lines_segment}. 
\end{example}

Let $(P,\mathcal{L})$ an intersecting $r$-segment system with $r\geq\nu_2\geq3$ be an integer. A \emph{triangle} $T$ of $(P,\mathcal{L})$, is obtained by three points of $P$ non collinear and the three lines induced by them such that any two points of $T$ are consecutive integer points, see \cite{Oliveros}. 

Oliveros et al. \cite{Oliveros} proved that any intersecting 5-segment system satisfies $\tau\leq3$ (see Theorem 3.7 in \cite{Oliveros}). Also, they proved that any intersecting 5-segment has at most 6 lines if it contains a triangle; and moreover, those linear systems satisfy $\nu_2=4$. Hence, if $(P_5,\mathcal{L}_5)$ is any intersecting 5-segment system containing a triangle, then $(P_5,\mathcal{L}_5)\simeq_4(P_4,\mathcal{L}_4)$ (see Theorem 3.7 in \cite{Oliveros}). Therefore, $|\mathcal{L}_5|=|\mathcal{L}_4|\leq6$, by Corollary \ref{coro:iso_nu_2}.

Hence, Corollary \ref{coro:lines_segment}, Corollary \ref{coro:iso_nu_2} and Theorem \ref{teo:nu_uniform} gives a partial answer to the Question 3.9 given in \cite{Oliveros}:

{\bf Question 3.9.} What is the maximum number of lines that can occur in an intersecting $r$-segment system containing a triangle?.

Also, Oliveros et al. in \cite{Oliveros} conjectured the following:

\begin{conje}\label{conje:Oliveros}
Any intersecting $r$-segment system satisfies $\tau\leq\lceil r/2\rceil$, for $r\geq5$ an integer.
\end{conje}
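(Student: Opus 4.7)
The plan is strong induction on $r$, with base case $r = 5$ provided by Theorem~3.7 of Oliveros, O'Neill and Zerbib. Fix an intersecting $r$-segment system $(P,\mathcal{L})$ with $r \geq 6$; because every $r$-segment system satisfies $r \geq \nu_2$, I would split the analysis into three regimes according to the size of $\nu_2$ relative to $\lceil r/2 \rceil$.

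The easy regime is $\nu_2 \leq \lceil r/2 \rceil + 1$: Theorem~\ref{teo_anterior} immediately gives $\tau \leq \nu_2 - 1 \leq \lceil r/2 \rceil$. The extremal regime is $\nu_2 = r$, where the system is $\nu_2$-uniform. Here my target is to strengthen Theorem~\ref{teo:nu_uniform} into the statement that the maximum degree $\Delta$ must satisfy $\Delta \geq \lfloor \nu_2/2 \rfloor + 2$; combining the identity $|E(B)| = \nu_2 |\mathcal{L}|$ with $|E(B)| \leq \Delta \cdot |P|$, the girth-six planar inequality $|E(B)| \leq \tfrac{3}{2}(|P|+|\mathcal{L}|-2)$, and the cap $|P| \leq \nu_2^2 - \nu_2 + 1$ used in the proof of Theorem~\ref{teo:nu_uniform} should, for $\nu_2 \geq 5$, force a contradiction with $|\mathcal{L}| > \nu_2$ unless such a heavy point exists. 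Once that point is produced, Corollary~\ref{coro:main} settles the case with $\tau = \lceil \nu_2/2 \rceil \leq \lceil r/2 \rceil$.

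The main obstacle is the intermediate regime $\lceil r/2 \rceil + 2 \leq \nu_2 \leq r - 1$. My approach would be to use Theorem~\ref{teo:oliveros} to locate a degree-one point $p$ lying on some line $l \in \mathcal{L}$, delete $l$ via Corollary~\ref{coro:segment_subsystem}, and invoke the inductive hypothesis on $|\mathcal{L}|$ to obtain a transversal $T$ of the smaller intersecting $r$-segment system with $|T| \leq \lceil r/2 \rceil$. The crucial question is whether $T$ already hits $l$; if so, we are done, so the entire difficulty is concentrated in the opposite situation. I would argue that if $T$ misses $l$, then the $|\mathcal{L}|-1$ other lines meet $l$ at distinct interior integer points while their $T$-representatives lie off $l$, a highly rigid configuration in which one can swap some $q \in T$ for the intersection point it represents on $l$, preserving $|T|$ while gaining coverage of $l$.

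Formalising this swap into a clean lemma is the technical heart of the proof, and I expect it to be the hardest step: one must rule out the pathological case in which every vertex of $T$ simultaneously has high degree and its replacement would leave some other line uncovered. I would attack this by showing, through a geometric-combinatorial argument about how short integer segments in $\mathbb{Z}^2$ can pairwise intersect, that if no valid swap exists then the restriction of $(P,\mathcal{L})$ to the neighborhood of $l$ already forces a point of degree at least $\lfloor \nu_2/2 \rfloor + 2$, reducing back to the extremal regime handled in the previous step and closing the induction.
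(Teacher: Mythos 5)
The statement you set out to prove is Conjecture \ref{conje:Oliveros}: it is an open conjecture of Oliveros, O'Neill and Zerbib, and this paper does not prove it --- it only exhibits families satisfying it (Corollary \ref{coro:final}, and the $\Delta=2$ systems with $\tau=\lceil\nu_2/2\rceil$). Your proposal does not close it either; two of your three regimes rest on unproved claims. In the extremal regime $\nu_2=r$ you assert that the counting behind Theorem \ref{teo:nu_uniform}, combined with $|E(B)|=\nu_2|\mathcal{L}|\le\Delta|P|$ and $|P|\le\nu_2^2-\nu_2+1$, forces $\Delta\ge\lfloor\nu_2/2\rfloor+2$ whenever $|\mathcal{L}|>\nu_2$. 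Those inequalities force nothing of the sort: in an intersecting system one has $\sum_{p\in P}\binom{d_p}{2}=\binom{|\mathcal{L}|}{2}$ and $\sum_{p\in P}d_p=\nu_2|\mathcal{L}|$, and with $|\mathcal{L}|\le\frac{3\nu_2+1}{2}$ both constraints are already numerically consistent with $\Delta=3$, while $\nu_2|\mathcal{L}|\le\Delta|P|$ with $|P|$ as large as $\nu_2^2-\nu_2+1$ yields only $\Delta\ge 3/2$ or so. The configuration of Figure \ref{fig:Fano_pecas} ($\nu_2=4$, six lines, maximum degree $3<\lfloor\nu_2/2\rfloor+2$, $\tau=3$) shows the analogous forcing statement is false at $\nu_2=4$, and you give no mechanism that changes matters for $\nu_2\ge5$. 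This regime is precisely where the conjecture is open: the best bound obtained in the paper there is $\tau\le\frac{2}{3}(\nu_2+1)$ (Theorem \ref{teo:main_2}), which strictly exceeds $\lceil\nu_2/2\rceil$ for every $\nu_2\ge5$.

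The intermediate regime $\lceil r/2\rceil+2\le\nu_2\le r-1$ is likewise not established. After deleting $l$ via Corollary \ref{coro:segment_subsystem} you are really inducting on $|\mathcal{L}|$ at fixed $r$, not on $r$ --- repairable, but the base of that inner induction is never stated --- and the crucial swap step is only a hope: if the transversal $T$ of the reduced system misses $l$, replacing a point $q\in T$ by a point of $l$ uncovers every line that $T$ met only at $q$, and you offer no argument that a valid swap always exists; you yourself flag that lemma as the unproved heart of the argument. The fallback claim, that failure of all swaps forces a point of degree at least $\lfloor\nu_2/2\rfloor+2$, is also unargued, and even granting it the proposed reduction collapses: Corollary \ref{coro:main} and Theorem \ref{teo:nu_uniform} require $r=\nu_2$, so in this regime a heavy point yields neither $\tau=\lceil\nu_2/2\rceil$ nor any bound on $|\mathcal{L}|$, hence no route to $\lceil r/2\rceil$. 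Only your base case $r=5$ and the easy regime $\nu_2\le\lceil r/2\rceil+1$ (via Theorem \ref{teo_anterior} together with $r\ge\nu_2$ for intersecting segment systems) are sound; the remaining cases are exactly the open content of the conjecture.
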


\begin{figure}
	\begin{center}
		\subfigure[]{\includegraphics[height =3.5cm]{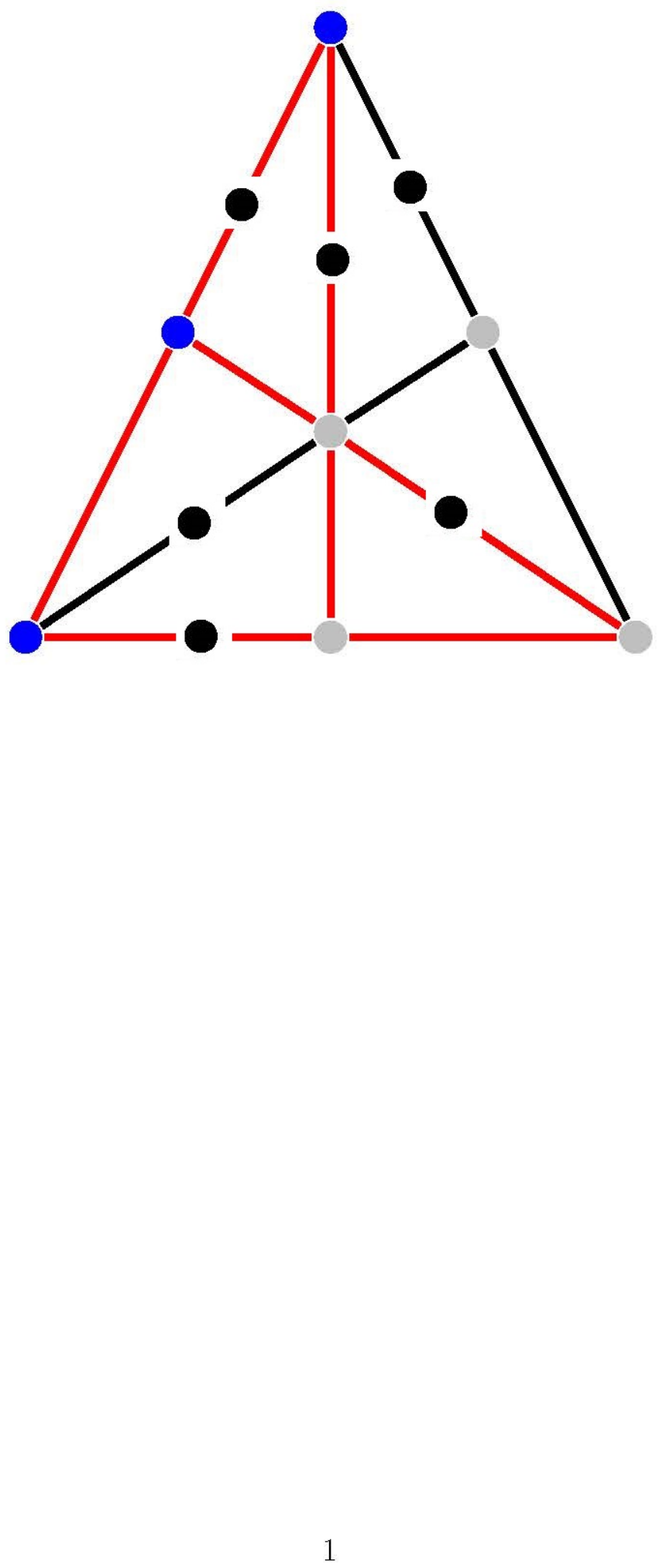}}
		\hspace{1cm}
		\subfigure[]{\includegraphics[height =3.5cm]{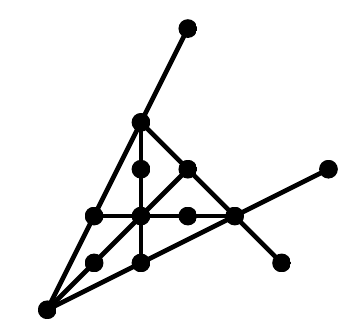}}
	\end{center}
	\caption{Projective plane of order 2 (Fano's plane) without a line (any) with adding a single point for each one line and (b) is the corresponding isomorphic intersecting 4-segment system (Fig. 2. (b) from \cite{Oliveros}).}\label{fig:Fano_pecas}
\end{figure}

Hence, if Conjecture \ref{conje:Oliveros} is true, then by Equation (\ref{desigualdad}), we have $$\lceil\nu_2/2\rceil\leq\tau\leq\lceil r/2\rceil,$$and the equality holds if and only if either $\nu_2=r$ or $\nu_2=r-1$ if $r$ is an even integer.

Alfaro and Vázquez-Ávila in \cite{AvilaLetters} proved that any intersecting linear system with $\Delta=2$ satisfies $\tau=\lceil\nu_2/2\rceil$. Oliveros et al. \cite{Oliveros} constructed an $r$-segment system with $\tau=\lceil r/2\rceil$, for every $r\geq3$ an integer. Those $r$-segment systems are such that $\nu_2=r$ and $\Delta=2$. 

Finally, as a simple consequence of Corollary \ref{coro:main} is the following:

\begin{coro}\label{coro:final}
Let $(P,\mathcal{L})$ be an intersecting $\nu_2$-segment system with  $\nu_2\geq3$ an integer. If $\Delta\geq\lfloor\frac{\nu_2}{2}\rfloor+2$ then $\tau=\lceil\frac{\nu_2}{2}\rceil$. 
\end{coro}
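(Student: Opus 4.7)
The plan is a direct reduction to Corollary \ref{coro:main}. Recall that, as stated just after the definition of an $r$-segment system (and established in \cite{Oliveros}), every $r$-segment system is itself an $r$-uniform straight line system on $\mathbb{R}^{2}$: the points of $P\subseteq\mathbb{Z}^{2}$ together with the straight line segments supporting each line of $\mathcal{L}$ provide a valid straight line representation, and the extra requirement that these be consecutive integer points on some line in $\mathbb{R}^{2}$ only restricts, never expands, the class. In particular, an intersecting $\nu_2$-segment system is an intersecting $\nu_2$-uniform straight line system.

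Thus the only task is to verify that every hypothesis of Corollary \ref{coro:main} is inherited. The uniformity parameter $\nu_2$ and the degree bound $\Delta\geq\lfloor\nu_2/2\rfloor+2$ are identical in both statements; the assumption $\nu_2\geq 3$ is identical; and the condition $|\mathcal{L}|>\nu_2$ is the global standing hypothesis of the paper (already invoked because any linear system with $|\mathcal{L}|=\nu_2$ forces $\Delta\leq 2$, which is incompatible with $\Delta\geq\lfloor\nu_2/2\rfloor+2$ for $\nu_2\geq 3$, so the assumption is actually automatic here). Consequently, Corollary \ref{coro:main} applies and yields $\tau=\lceil\nu_2/2\rceil$ immediately.

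There is essentially no obstacle: the whole content of the statement lies in recognizing that the segment-system framework sits inside the straight line system framework, so that Corollary \ref{coro:main} transfers verbatim. No additional case analysis, no invocation of Theorem \ref{teo:nu_uniform}, and no new combinatorial argument is required; the proof reduces to a one-line appeal after the class inclusion is acknowledged.
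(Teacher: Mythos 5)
Your proposal is correct and matches the paper's intended argument exactly: the paper presents Corollary \ref{coro:final} as an immediate consequence of Corollary \ref{coro:main}, using precisely the fact that every $r$-segment system is an $r$-uniform straight line system. Your additional observation that the standing hypothesis $|\mathcal{L}|>\nu_2$ is automatic (since $\Delta\geq\lfloor\nu_2/2\rfloor+2\geq3$ rules out $\Delta\leq2$, hence $|\mathcal{L}|=\nu_2$) is a correct and welcome bit of extra care, not a departure from the paper's route.
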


Corollary \ref{coro:final} shows examples of intersecting segment systems satisfying the Conjecture \ref{conje:Oliveros}. 

{\bf Acknowledgment}

Research was partially supported by SNI and CONACyT. 

\bibliographystyle{amsplain}

\end{document}